\newtheorem{theorem}{Theorem}
\newtheorem{lemma}[theorem]{Lemma}
\theoremstyle{definition}
\newtheorem{example}{Example}
\newtheorem{problem}{Problem}
\newcommand{\Hall}{\mathbf{m}}
\newcommand{\Al}{\Sigma}
\newcommand{\eps}{\varepsilon}
\newcommand{\per}{\partial}
\newcommand{\EXTRA}[1]{}
\newcommand{\Cr}{\eta}
\newcommand{\sdot}{{\cdot}}
\newcommand{\tm}{\mathbf{t}}
\begin{document}

\title{A Note on Squares in Binary Words}

\author{Tero Harju\\
Department of Mathematics and Statistics\\
         University of~Turku, Finland\\
         \texttt{harju@utu.fi}\\
         }

\maketitle

\begin{abstract}
We consider words over a binary alphabet.
A word $w$ is overlap-free if it does not have factors 
(blocks of consecutive letters) of the form $uvuvu$ for
nonempty $u$. Let $M(w)$ denote the number of positions
that are middle positions of squares in $w$.
We show that for overlap-free binary words,
$2M(w) \le |w|+3$, and that there are infinitely many 
overlap-free binary words for which $2M(w)=|w|+3$.
\end{abstract}

\noindent
\textbf{Keywords.}
Overlap-free words, squares, centre of a square, binary words.

\section{Introduction}

Overlap-free words were first studied by Axel Thue in two papers~\cite{Thue:06,Thue:12} in 1906 and 1912. In particular
he proved that there are infinite overlap-free binary words.

In the following we consider binary words, and choose the alphabet
$\Al=\{0,1\}$ for these words.
An \emph{overlap} is a word of the form $uvuvu$ where 
$u$ is nonempty. 
A word $w$ is said to be \emph{overlap-free} if it has no 
overlaps as factors, i.e., consecutive blocks of letters.
Equally well, we can presume in the definition that $x$ is a letter. 
The most well known overlap-free words are the factors of the 
infinite \emph{Thue-Morse word} $\tm$ that is obtained by
iterating the morphism 
$\mu\colon \Al^* \to \Al^*$ with
\[
\mu(0)=01 \ \text{  and } \ \mu(1)=10
\]
on the initial word $0$. Therefore
\[
\tm= 0110 1001 1001 0110 1001 0110 \cdots
\]
The morphism $\mu$ preserves overlap-freeness,
i.e., if $w$ is overlap-free, then so is $\mu(w)$; see e.g.
Chapter~2 of Lothaire~\cite{LothaireI} or Berstel and S\'e\'ebold~\cite{BerstelSeebold}.

Let $|w|$ denote the length of $w$.
An integer~$p$ with \hbox{$1\leq p < |w|$} is  
a \emph{position} in~$w$. It denotes the place after the prefix 
of length $p$.
A nonempty word $u$ is a \emph{square at
position}~$p=|x|$  in $w=xy$ if there are, possibly empty, words $x'$ and $y'$ such that $x=x'u$ and $y=uy'$.
In this case we say that $p$ is a \emph{centre of a square}.

Let $\eps$ denote the empty word.
If $w=xuy$ then $u$ is a~\emph{factor} of $w$.
It is a \emph{prefix} if $x=\eps$, and a \emph{suffix} 
if $y=\eps$.
The word~$w$ is said to be \emph{bordered} if there exists a nonempty word~$v$, with $v \ne w$, that is both a prefix and a suffix of $w$. 
For a word $w=uv$, we denote by $u=wv^{-1}$  the prefix of $w$
when the suffix $v$ is deleted.

\section{On the maximum number of centres}

We consider the number of centres of squares in binary words.
We follow Harju and K\"arki~\cite{HarjuKarki}, and define
\[
M(w) = \# \{p \mid \text{$p$ is a centre of a square in $w$} \}.
\]
In~\cite{HarjuKarki} the authors count `frames', i.e.,
unbordered squares. The number of these equals $M(w)$
since if $uu$ is a square
at position $p$ then for the border $v$ of $u$ of minimal length 
also $vv$ is a square at $p$.

In the general binary case the paper~\cite[Theorem~4]{HarjuKarki}
gives the minimum number of centres of squares:

\begin{theorem}
For binary words of length $n \ge 3$, 	
\[
\min_{|w|=n}  M(w) = \left\lceil \frac{n}{2}\right\rceil-2.
\]
\end{theorem}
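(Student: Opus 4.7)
The equality in the theorem splits into an upper bound on the minimum (exhibit a word) and a matching lower bound (every word has enough centres).

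\textbf{Construction.} For each $n\geq 3$, I would exhibit an explicit binary word $w_n$ with $M(w_n)=\lceil n/2\rceil-2$. A good family to try is based on near-periodic or Sturmian-like patterns; for small $n$ one checks by hand that candidates such as $010$, $01001$, $010010$, $010010110$, and more generally words obtained by inserting an isolated $1$ into a long block of $0$s, attain the bound. In any such $w_n$, the centres are counted by a direct enumeration of matching suffix/prefix pairs, and only the two endpoint positions together with one interior position fail to be centres, giving the total $\lceil n/2\rceil-2$.

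\textbf{Lower bound.} Let $w$ be binary of length $n\geq 3$ and let $N$ be the number of positions $p\in\{1,\ldots,n-1\}$ that are \emph{not} centres of any square. The goal is $N\leq \lfloor n/2\rfloor+1$, which is equivalent to $M(w)\geq \lceil n/2\rceil-2$. The argument hinges on two local structural facts. \emph{First}, three consecutive non-centres are impossible: if $p,p+1,p+2$ were all non-centres, each equality $w[i]=w[i+1]$ would create a length-$2$ square centred at $i$, so $w[p]\neq w[p+1]$, $w[p+1]\neq w[p+2]$, $w[p+2]\neq w[p+3]$, forcing $w[p..p+3]$ to be alternating, say $abab$; but $abab$ is itself a length-$4$ square centred at $p+1$, contradicting the assumption. \emph{Second}, if $p$ and $p+1$ are both non-centres with $2\leq p\leq n-3$, then ruling out length-$2$ and length-$4$ squares at both $p$ and $p+1$ successively determines $w[p-1..p+3]=aabaa$ with $a\neq b$, i.e., either $00100$ or $11011$. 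In particular $w[p-1]w[p]$ and $w[p+2]w[p+3]$ are squares, so positions $p-1$ and $p+2$ are compulsorily centres.

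\textbf{Combining and the main obstacle.} From these two lemmas the non-centres decompose into isolated blocks of size $1$ or $2$, each interior block flanked on both sides by centres. A partition of $\{1,\ldots,n-1\}$ into alternating non-centre and centre runs — together with careful separate treatment of the two endpoint positions, where the ``lookaround'' of the structural claims breaks down — yields $N\leq\lfloor n/2\rfloor+1$. The $-2$ offset in the final statement corresponds precisely to the two boundary positions, which may be non-centres without forcing a flanking centre. The hardest step is the sharpness: the no-three-in-a-row constraint alone gives only $N\leq 2(n-1)/3$, and getting down to the tight linear bound really does require both the pair-structure lemma and meticulous boundary bookkeeping.
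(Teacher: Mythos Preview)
The paper does not actually prove this theorem; it is quoted verbatim from Harju and K\"arki~\cite{HarjuKarki} (their Theorem~4) and no argument is given here. So there is no ``paper's own proof'' to compare your attempt against.

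That said, your proposal is a plan rather than a proof, and both halves have real gaps.

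\smallskip
\emph{Construction.} The explicit short words you list ($010$, $01001$, $010010$, $010010110$) do hit the bound, but the general recipe you offer---``inserting an isolated $1$ into a long block of $0$s''---does not describe these words and in fact fails: for example $00100$ has $M(00100)=2$, whereas $\lceil 5/2\rceil-2=1$. You need an actual infinite family together with a verification that $M$ equals $\lceil n/2\rceil-2$ for every member; none is provided.

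\smallskip
\emph{Lower bound.} Your two local lemmas are correct: three consecutive non-centres are impossible (the forced alternating block $abab$ creates a square at the middle position), and an interior pair of non-centres forces the local pattern $aabaa$, hence centres immediately on either side. However, these facts by themselves do not yield $N\le\lfloor n/2\rfloor+1$. As you yourself note, ``no three in a row'' only gives $N\le 2(n-1)/3$, and adding the $aabaa$ lemma restricts where \emph{pairs} of non-centres can sit, but you never carry out the counting that converts this into the required inequality. For instance, one must argue that after a non-centre pair at $p,p+1$ the next non-centre pair cannot start at $p+3$ (indeed, the forced pattern $aabaabaa$ would place a length-$6$ square $baabaa$ at position $p+3$), and then turn all such local exclusions into a global density bound including the two boundary positions. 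None of this ``meticulous boundary bookkeeping'' is written down, so the argument as stated is incomplete.
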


The problem for $\max_{|w|=n} M(w)$ is trivial. Indeed, the unary word $0^n$ has a square at every position,
i.e., $M(0^n)=n-1$. On the other hand, in the ternary case,
i.e., words over a three letter alphabet, the minimum is zero
since there are (infinite) square-free words in this case.
Here we shall consider the maximum problem for the 
binary overlap-free words.

The next technical lemma will be
used for short prefixes $x$ in the proof of Theorem~\ref{main}.

\begin{lemma}\label{squares}
Let $s$ be the longest common suffix of the binary words $x$ and $w$.
Assume that $xw$ and $ww$ are
overlap-free. Then $xw(ws^{-1})$ is overlap-free.
\end{lemma}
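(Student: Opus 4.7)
The plan is to write $x = x' s$ and $w = w' s$, so that $w' = ws^{-1}$ and
\[
xw(ws^{-1}) = x'\, s\, w'\, s\, w' .
\]
By maximality of $s$, whenever both $x'$ and $w'$ are nonempty their last letters differ. I will exploit two factorisations of the target word, namely $(xw)\cdot w'$ and $x \cdot (ww')$: the factor $xw$ is overlap-free by hypothesis, while $ww' = w' s w'$ is a prefix of $ww$ and is therefore also overlap-free.

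Arguing by contradiction, suppose $xw(ws^{-1})$ contains an overlap $Y$. Since neither $xw$ nor $ww'$ does, $Y$ must begin at a position $i \le |x|$ and end at a position strictly beyond $|xw|$, which forces $|Y| \ge |w|+2$. Taking $Y = avava$ of minimum length (with $a$ a letter and $|v| = n$), the occurrence has period $n+1$ over a window of length $2n+3 \ge |w|+2$; the three copies of $a$ sit at positions $i$, $i+n+1$ and $i+2n+2$ of $xw(ws^{-1})$.

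I would conclude by locating the middle $a$ inside the decomposition $x'\, s\, w'\, s\, w'$ and performing a case analysis according to whether position $i+n+1$ lies inside the first $s$, the middle $w'$, or the second $s$. In each case, using the period $n+1$ to translate $v$ should either align copies of $s$ or $w'$ so as to produce a shorter overlap inside $xw$ or inside $ww$ (contradicting a hypothesis), or force the letter preceding $s$ in $x$ to coincide with the letter preceding $s$ in $w$, contradicting the maximality of $s$. The delicate step is this last case analysis: because $|v|$ may be comparable with $|w|$, the periodicity window can straddle both copies of $s$ simultaneously, and the argument must invoke overlap-freeness of $xw$, overlap-freeness of $ww$, and the maximality property of $s$ in a coordinated way. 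I expect getting every alignment of $Y$ with the two distinguished boundary letters to be covered cleanly to be the main obstacle.
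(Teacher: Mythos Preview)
Your setup is sound: writing $xw(ws^{-1})=x'\,s\,w'\,s\,w'$ and observing that any overlap must begin inside $x$ and end past $xw$ is exactly the right localisation, and the length bound $|Y|\ge |w|+2$ is correct. However, what you present is a plan, not a proof: you yourself flag that the three-way case split on the position of the middle letter of $avava$ is ``the main obstacle'' and do not carry it out. As stated, nothing in your outline explains \emph{how} the period $n+1$ would, in each sub-case, manufacture an overlap inside $xw$ or inside $ww$, or pin the last letters of $x'$ and $w'$ together. So there is a genuine gap: the heart of the argument is missing.

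The paper's proof avoids your positional case analysis entirely by organising the argument around the two copies of $w$ rather than around $s$. It looks at a square $(xu)(xu)$ that is a \emph{prefix} of $xww$ long enough to reach into the second $w$; then $u$ is a prefix of the first $w$ and (via $xu=yv$) also a suffix of the block $yv$ straddling the $w\,|\,w$ boundary, so $u$ occurs twice inside $ww$. Overlap-freeness of $ww$ forces these two occurrences of $u$ to be non-overlapping, giving a clean dichotomy: either $u=v$, in which case one computes $w=ux$, $x$ is a common suffix (hence a suffix of $s$), and the square is exactly $xw(wx^{-1})$, too short to extend to an overlap inside $xw(ws^{-1})$; or $v=uzu$, which after substitution exhibits an overlap $uzuzu$ inside $xw$, contradicting the hypothesis. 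No case split on the location of a middle letter is needed, and the maximality of $s$ enters only at the very end of the first branch.

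In short: your decomposition $x'sw'sw'$ is reasonable but leads to a harder case analysis than necessary; the paper's key simplification is to exploit overlap-freeness of $ww$ to control how the two halves of the offending square can sit across the $w\,|\,w$ boundary, which collapses everything to two cases.
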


\begin{proof}
Assume that there is a square $xuxu$ as a prefix of $xww$ such that
$|uxu| \ge |w|$.
Then $u$ is a prefix $w$, i.e.,  $w=uy=vt$ for $y$ and $t$
with $xu=yv$;
see Fig.~\ref{Fig:1}.

The word $u$ is a suffix of $yv \ (=xu)$ and a prefix of $w=vt \ (=uy)$.
The occurrences of $u$ cannot overlap in the overlap-free $ww$.
Therefore either
(1) $u=v$ or (2) $v=uzu$ for some $z$.

\smallskip
(1) Suppose first that $u=v$ and thus that $y=x$ and $t=x$. 
Then $x$ is a suffix of the common suffix $s$
and $xw(wx^{-1})=xuxu$. Therefore 
$xuxu$ does not extend to an overlapping factor $xuxua$ of $xw(ws^{-1})$.

\smallskip
 (2) Assume then that $v=uzu$ and hence that $y=zut$ and $w=uzut$. 
Then $xu=yv=zutuzu$, and finally,
$xw=xuy=zutuzuzut$ which contains an overlap $uzuzu$
(since $u \ne \eps$); a contradiction with the overlap-freeness of $xw$.
\end{proof}

\begin{figure}[htb]
\tikzstyle{mystate}=[inner sep=0pt, outer sep=0pt]
\begin{center}
\begin{tikzpicture}[line width=0.06pc, scale=1](10,2)
\node  [mystate] (Z) at (-1,0.5) {};
\node  [mystate] (A) at (1,0.5) {} ; 
\node  [mystate] (B) at (10,0.5) {};
\node [mystate]  (D) at (5,0.5) {};
\draw (A)--(B);
\draw (Z)--(A);
\draw (Z)--(-1,0.7);
\draw (A)--(1,0.80);
\draw (A)--(1,0.20);
\draw (B)--(10,0.8);
\draw (D)--(5,0.8);
\draw (D)--(5,0.2);
\node at (3,0.8) {$w$};
\node at (7,0.8) {$w$};
\node at (0.0,0.8) {$x$};
\node (C) at (3,0.5){};
\draw (3,0.5)--(3,0.2);
\node (E) at (7,0.5){};
\draw (7,0.5)--(7,0.2);
\node at (2,0.04) {$u$};
\node at (4.0,0.04) {$y$};
\node at (6.0,0.04) {$v=uzu$};
\node at (8.25,0.07) {$t$};
 \end{tikzpicture}
 \end{center}
 \caption{Overlapping factor: $xu=yv$.
}
 \label{Fig:1}
\end{figure}
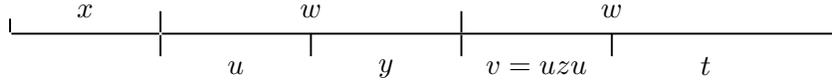

In particular, if in the above $s=\eps$ and $xw$ and $ww$ are overlap-free
then also $xww$ is overlap-free.

The next lemma was proved in~\cite[Lemma~10]{HarjuKarki}.
We note that in the Thue-Morse word~$\tm$ 
there are no consecutive centres of squares. This
can be seen by looking at the short factors of $\tm$. Indeed,
the unbordered squares in $\tm$ are of the form $uu$
for $u \in \{0,1,01,10\}$; see Pansiot~\cite{Pansiot}.

\begin{lemma}\label{TM:even}
Let $w$ be a factor of even length $n \ge 4$ of the Thue-Morse word
starting at an even position. Then all squares are at even positions,
and hence $M(w)=n/2-1$.	
\end{lemma}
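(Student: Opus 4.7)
The plan is to exploit the factorisation $w=\mu(w')$, where $w'=w'_0w'_1\cdots w'_{n/2-1}$ is a factor of $\tm$; such a factorisation exists because $|w|$ is even and $w$ starts at an even position of $\tm$. Combined with the observation recalled just before the lemma---that every centre of a square in a factor of $\tm$ carries an unbordered square $uu$ with $u\in\{0,1,01,10\}$---this reduces the problem to a short case analysis on $u$.

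For the upper bound $M(w)\le n/2-1$, I would show that every centre lies at an even position. Since $\mu(0)=01$ and $\mu(1)=10$ each consist of two distinct letters, two adjacent letters of $\tm$ can be equal only when they straddle a $\mu$-block boundary, i.e., occupy indices $2i+1,2i+2$. Hence a length-two square $aa$ in $\tm$ must start at an odd index, which, shifted by the even starting position of $w$, forces its centre $p$ to be even. For $u\in\{01,10\}$, say $u=01$, suppose $0101$ occurs at an odd index $j$ of $\tm$: then the three consecutive $\mu$-pairs $(t_{j-1},t_j)$, $(t_{j+1},t_{j+2})$, $(t_{j+3},t_{j+4})$ all equal $10$, so the $\mu$-preimage of this length-six factor is the cube $111$, which by $\tm=\mu(\tm)$ would itself appear in $\tm$---contradicting overlap-freeness. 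The case $u=10$ is symmetric and produces $000$.

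For the matching lower bound $M(w)\ge n/2-1$, I would exhibit a square at each even position $p=2q$ with $1\le q\le n/2-1$. The letter immediately to the left of $p$ is the second letter of $\mu(w'_{q-1})$, namely $1-w'_{q-1}$, and the letter immediately to the right is the first letter of $\mu(w'_q)$, namely $w'_q$. If $w'_{q-1}\ne w'_q$ these two letters coincide, giving a length-one square at $p$. If $w'_{q-1}=w'_q$, then $\mu(w'_{q-1})=\mu(w'_q)$ provides a length-two square at $p$. Either way $p$ is a centre, and the $n/2-1$ even positions $2,4,\ldots,n-2$ give the matching count.

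The main obstacle is the $u\in\{01,10\}$ subcase of the upper bound: the $u\in\{0,1\}$ cases fall out immediately from the form of $\mu$, whereas here the contradiction has to be extracted from the cube-freeness consequence of overlap-freeness rather than a direct parity argument.
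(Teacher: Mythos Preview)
Your argument is correct. Note, however, that the paper does not actually supply its own proof of this lemma: it cites \cite[Lemma~10]{HarjuKarki} and only remarks that the claim ``can be seen by looking at the short factors of~$\tm$'', invoking Pansiot's characterisation of the unbordered squares as $uu$ with $u\in\{0,1,01,10\}$. So there is no detailed proof in the paper to compare against, only this hint.

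Your route and the paper's hint share the same starting point (Pansiot's list of minimal squares), but you replace the suggested finite inspection of short factors of~$\tm$ by a structural argument through the factorisation $w=\mu(w')$. For $u\in\{0,1\}$ you read off parity directly from the shape of the $\mu$-blocks; for $u\in\{01,10\}$ you pull an odd-index occurrence back through~$\mu$ to produce a cube $111$ or $000$ in~$\tm$, contradicting overlap-freeness. This is cleaner than a case check and makes transparent \emph{why} the parity restriction holds. Your lower-bound half---showing that every even position $2q$ carries either a length-one square (when $w'_{q-1}\ne w'_q$) or a length-two square (when $w'_{q-1}=w'_q$)---is something the paper leaves entirely implicit, and your two-case split is exactly the right way to make it explicit.

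One small wording point: in the $u\in\{01,10\}$ case you use the letters $t_{j-1}$ and $t_{j+4}$, which may lie outside the factor~$w$ itself. This is harmless because the argument takes place in the infinite word~$\tm$ (any square in $w$ is a square in $\tm$), but it is worth saying so in the write-up.
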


We show in the next thorem that if an overlap-free word
satisfies the maximality condition
$2M(w)=|w|+3$ then $w$ has the form $w=auu$ (or symmetrically
$uua$). In Theorem~\ref{main}  we then construct overlap-free words
for which $M(w)$ reaches the upper bound.

We call the words $001001$ and $110110$ \emph{prefix special}
and their reversed words $100100$ and $011011$ 
\emph{suffix special}. These special words can occur only 
as a prefix (resp., suffix) of an overlap-free word since they cannot
be extended to the left (resp., right) without violating overlap-freeness.

\begin{theorem}\label{upper}
For overlap-free binary words $w$, we have $M(w) \le \left\lceil\frac{|w|}{2}\right\rceil+1$, i.e.,
\begin{equation}\label{eq:M}
2M(w) \le |w|+3\,.
\end{equation}
\end{theorem}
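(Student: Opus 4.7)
\noindent\emph{Proof plan.}
I plan to prove the bound by induction on $n = |w|$; the base cases for small $n$ are checked directly. The inductive step rests on a reduction lemma comparing the centers of $w$ with those of $w' = w_1\cdots w_{n-1}$, the word obtained by deleting the last letter. The reduction lemma states that $M(w) \le M(w') + 1 + [w_{n-1} = w_n]$. A position $p \le n-2$ that is a center of $w$ but not of $w'$ must rely on the last letter, so its shortest square $u_p u_p$ has $|u_p| = n-p$ and terminates at position $n$. If two positions $p_1 < p_2$ both had this property, the two ``maximal'' squares would both end at $n$; comparing them through their common tail forces $w$ to have a short period, and unwinding that periodicity produces either an $a^n$ factor or an $ababa$-type overlap. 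Hence at most one center in $\{1,\dots,n-2\}$ can be lost when $w_n$ is removed, and the only potentially new center is position $n-1$, which is a center exactly when $w_{n-1} = w_n$.

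Plugging the reduction into the inductive hypothesis yields only $M(w) \le M(w') + 2$, which is too weak by one. The tight estimate requires ruling out the simultaneous occurrence of a lost center with $w_{n-1} = w_n$ except in certain extremal configurations. These exceptional configurations turn out to be precisely the forms $w = uua$ (from peeling the right end) and, by the symmetric argument, $w = auu$ (from peeling the left end); the prefix- and suffix-special words introduced just before the theorem are exactly the end-blocks realizing this extremality. In these cases a short computation shows that the inductive hypothesis applied to $w'$ is strict rather than tight, providing a single unit of slack that absorbs the extra center. This closes the induction and simultaneously delivers the structural condition $w = auu$ or $w = uua$ claimed for the equality case.

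The main obstacle is the tightening step: converting $M(w) \le M(w') + 2$ into $M(w) \le M(w') + 1$ outside the extremal forms. The reduction lemma itself is comparatively clean and follows from the periodicity argument sketched above, but extending it to the sharp bound requires a careful case analysis at the right-hand end of $w$. Lemma~\ref{squares} plays a decisive role here: it lets me canonicalize configurations of overlapping long squares near the boundary by trimming common suffixes, after which the remaining combinatorics becomes tractable and the extremality can be identified with the prefix- and suffix-special building blocks.
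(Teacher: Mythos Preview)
Your inductive scheme does not close arithmetically. With $w' = w_1\cdots w_{n-1}$ and the inductive hypothesis $2M(w') \le (n-1)+3$, even the \emph{tightened} estimate $M(w)\le M(w')+1$ yields only $2M(w)\le n+4$, not $n+3$. Concretely, when $n$ is even the target $\lceil n/2\rceil+1$ equals the target for $w'$ (of odd length $n-1$), so the induction would require $M(w)\le M(w')$, which is plainly false (take $w=00$, $w'=0$). Your ``too weak by one'' diagnosis therefore understates the defect: after your proposed tightening you are still off by one in every even step, and no finite list of extremal exceptions repairs that.

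The only way to rescue the induction is to show that whenever $M(w)=M(w')+1$ with $n$ even, the word $w'$ is \emph{not} extremal; equivalently, that every extremal overlap-free word of odd length is right-inextendible (and, symmetrically, left-inextendible). But that is exactly the structural fact the paper proves directly: it shows that two consecutive centres $p,p+1$ away from the ends force $w=auu$ or $w=uua$ with $u=ab\cdots abb$, and then that such a word cannot be extended on the relevant side without creating an overlap. So your slack argument, once made precise, presupposes the paper's analysis rather than replacing it; the induction wrapper adds nothing. Note also that Lemma~\ref{squares} is used in the paper only for the \emph{construction} of extremal words (Theorem~\ref{main}); it says nothing about bounding $M(w)$ and does not ``canonicalize configurations of overlapping long squares'' in the way you describe. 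If you want a proof along the paper's lines, drop the induction and argue directly: classify where consecutive centres can occur, pin down the forced shape $auu$/$uua$, and count.
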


\begin{proof}
A word of length $n$ has $n-1$ positions of which
$\left[ n/2 \right]$ are odd positions
and $\left[ n/2 \right]-1$ or $\left[ n/2 \right]$ even positions
depending if $n$ is even or odd. 
For the bound \eqref{eq:M}
we look for the positions where the consecutive 
positions $p$ and $p+1$ are centres of squares. 

Let $|w|=n$.
If $w$ has a special prefix $001001$ or $110110$ then 
the positions~$3$ and $4$ are centres of squares. 
Similarly if $w$ has a suffix $100100$ or $011011$ then
the positions $n-4$ and $n-3$ are centres of squares.

Suppose then that $5 \le p \le n-5$, and assume that  
there are minimal length, and thus unbordered, 
squares $u$ and $v$ at positions $p$ and $p+1$. 
By appealing to symmetry, we can assume that $|u|\ge |v|$.
To avoid the overlap $ccc$ (with $c \in \{0,1\}$) that happens when $|u|=1=|v|$, 
necessarily $u=au'b$ for different letters $a,b$.
Also, $|u| \ge 3$ since if $u=ab$ then $uu=abab$
yields that $v=ba$
and there is an overlap $ababa$ in $w$.

\smallskip

\textbf{(A)} 
Assume first that $uu$ is not a prefix of $w$. 

\smallskip

\textbf{Claim 1}.
The square $uu$ is a suffix of $w$.

\smallskip
Indeed, $\beta=auu$ is a factor of $w$ 
since $buu$ would be an overlapping factor.
Hence $ab$ is a prefix of $u$. 
 
If $u=abb$ then $\beta=aabbabb$ ends in a special word. 
In this case there does not exist a square at $p+1$. 
(There is one at $p-1$.) Therefore $|u| \ge 4$. 

In order for $u$ to avoid $ab$ as a border, 
$bb$ must be a suffix of $u$. Hence 
$\beta=a(ab x abb)(ab x abb)$ for some $x$. 
It follows that $\beta$ is a suffix of $w$ since 
$\beta a$ and $\beta b$ have overlapping factors, 
$(ab x abb)(ab x abb)a$ and $bbb$, respectively.
This proves Claim 1.

\medskip
\textbf{Claim 2}.
$w=\beta$\,.

\smallskip
We have $x \ne \eps$ since if $x=\eps$, then $\beta=aababb ababb$ has an overlapping factor $babab$. 
Then $x$ has a prefix~$a$ 
in order for $u$ to avoid $abb$ as a border.
So far we have $\beta = a(aba y abb)(aba y abb)$.
In particular, $vv=baba$ is the minimal square at $p+1$.  
Now, either $y = \eps$ or, in order to avoid the factor
$babab$,  $ab$ is a prefix of $y$, i.e., $\beta= a(abaab z abb)(abaab z abb)$ for a word $z$.
In both cases, the prefix is $aabaab$ that can occur 
only at the beginning of $w$.
Hence $\beta=w$, and therefore
\[
w = auu = a(abaab z abb)(abaab z abb)\,.
\]

\smallskip 
Let $|u|=2k$ or $|u|=2k+1$.

The positions $1$ and $2$ of $u$ are not centres of squares in $u$,
and hence, by itself, $u$ can have at most $k-1$
odd or even centres of squares.
However, the position $3$ of $w$ is a centre of  $aabaab$
and the position $p+1$ of $w$ is centre of $vv$.
Together with the centres of $uu$ and $aa$,
we have $M(w) \le 2k+2$, and so $2M(w)\le |w|+3$
where $|w|=4k+1$.

\medskip

\textbf{(B)}
The case  where $uu$ is not a suffix of $w$ is symmetric to the case (A). In this case, $w$ has the form $uua$.

\smallskip
\textbf{(C)}
If $w=uu$ then $w$ cannot have special words in the prefix or 
in the suffix. 
Hence $M(w) \le |w|/2 +1$, i.e., $2M(w) \le |w|+2$. 

\smallskip

Finally, suppose that $w$ does not have centres of squares $p$ for
$5 \le p \le |w|-5$. Then the consecutive centres of squares
are in the special words.  E.g., by considering a possible factorisation
$w=(00100) 11 x 00 (11011)$ with special words at the ends, we obtain
$M(w) \le (|w|-10)/2 +6=|w|/2 +1$, and so $2M(w) \le |w|+2$.
\end{proof}

By the proof of Theorem~\ref{upper}, the bound 
$\left\lceil\frac{|w|}{2}\right\rceil+1$ can be obtained only
by overlap-free words of odd length.

\begin{theorem}\label{main}
There are infinitely many overlap-free binary words $w$ such that
\[
M(w) = \left\lceil\frac{|w|}{2}\right\rceil+1.
\]	
\end{theorem}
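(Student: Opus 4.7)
I plan to exhibit an infinite family of extremal words of the form $w_k = 0\,u_k\,u_k$ identified in the proof of Theorem~\ref{upper}, taking $u_k$ to be a carefully chosen factor of the Thue--Morse word~$\tm$. For each positive integer $k$ with $\tm_{k+1}=0$ and $\tm_{k+2}=1$, I set $u_k = \tm_4\tm_5\cdots\tm_{3+2k}$ and $w_k = 0\,u_k\,u_k$. The prefix condition $\tm_4\tm_5\tm_6\tm_7\tm_8 = 01001 = abaab$ is immediate. The suffix condition that $u_k$ end in $011=abb$ is equivalent, via the substitution identities $\tm_{2n-1}=\tm_n$ and $\tm_{2n}=1-\tm_n$, to the stated constraint on $\tm_{k+1}$ and $\tm_{k+2}$. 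This constraint holds for infinitely many $k$ --- for instance for every $k = 3\cdot 2^m$ with $m\ge 1$, since such a $k$ has exactly two $1$'s and $k+1$ exactly three $1$'s in binary.

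To count the centres, Lemma~\ref{TM:even} yields $M(u_k) = k-1$, so the two copies of $u_k$ inside $w_k$ together contribute $2(k-1)$ centres, at the $w$-positions $4,6,\ldots,2k$ and $2k+4,\ldots,4k$. Four further structural centres appear: $w$-position~$1$ (the square $aa$), $w$-position~$3$ (the length-$6$ square $aabaab$ from the special prefix), $w$-position~$2k+1$ (the square $u_k u_k$), and $w$-position~$2k+2$ (the length-$4$ square $baba$ formed at the junction, since $u_k$ ends in $bb$ and starts with $ab$). Summing gives $M(w_k)\ge 2k+2 = \lceil|w_k|/2\rceil+1$, and Theorem~\ref{upper} supplies the reverse inequality.

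The main task is to prove that $w_k$ is overlap-free. I would apply Lemma~\ref{squares} with $x=0$, $w=u_k$ and empty common suffix (since $u_k$ ends in $1$), reducing the problem to verifying that $0\,u_k$ and $u_k u_k$ are each overlap-free. For $0\,u_k$, the only potential new overlap starts at position~$1$ and extends the special prefix $001001$; the seventh letter $\tm_9=1$ blocks the critical $0010010$ extension, and a finite check handles the short remainder. The delicate step, and the principal obstacle I anticipate, is the overlap-freeness of $u_k u_k$: I would exploit the factorisation $u_k = 0\cdot\mu(\tm_3\tm_4\cdots\tm_{k+1})\cdot 1$ together with $\mu$'s preservation of overlap-freeness to trace any hypothetical overlap in $u_k u_k$ back to an impossible overlap in a Thue--Morse factor, or else proceed by induction on $m$ in the parametrisation $k = 3\cdot 2^m$, with the base case $k=6$ (giving $|w_6|=25$ and $M(w_6)=14$) verified by direct inspection.
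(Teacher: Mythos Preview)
Your construction is, up to reparametrisation, identical to the paper's: for $k=3\cdot 2^m$ one has $u_k=\tm_4\cdots\tm_{3+3\cdot 2^{m+1}}=0\overline{\alpha}_{m+1}$ (since $\tm_4=0$ and $\alpha_n$ ends in~$0$), so your $w_k$ is precisely the paper's $w_{m+1}=00\alpha_{m+1}\overline{\alpha}_{m+1}$. The centre count is also the paper's, though note a small slip: Lemma~\ref{TM:even} concerns factors beginning at an \emph{even} position of~$\tm$, whereas $u_k$ begins after the odd-length prefix $\tm_1\tm_2\tm_3$; the symmetric statement (centres at odd positions of $u_k$, still $k-1$ of them) is what you actually use and is what your list $4,6,\ldots,2k$ reflects.

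The genuine gap is in the overlap-freeness arguments. First, your treatment of $0u_k$ is too quick: ``the seventh letter $\tm_9=1$ blocks $0010010$'' rules out only the length-$7$ prefix overlap, and ``a finite check'' does not explain why longer prefix overlaps $0v0v0$ are impossible. The paper avoids this difficulty entirely: it applies Lemma~\ref{squares} with $x=0$ and $w=\alpha_n$ (not $u_k$), so the hypothesis ``$xw$ overlap-free'' becomes ``$0\alpha_n$ overlap-free'', and $0\alpha_n=\tm_4\tm_5\cdots$ is a factor of~$\tm$, hence overlap-free for free. Your choice $w=u_k$ forces you to prove overlap-freeness of $0u_k=00\overline{\alpha}_n$, which is \emph{not} a Thue--Morse factor. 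Second, and more seriously, the overlap-freeness of $u_ku_k$ is only promised, not proved. Your first idea, writing $u_ku_k=0\mu(v1v)1$, reduces the problem to the overlap-freeness of $v1v$ plus control of the two boundary letters --- but $v1v$ is essentially the same problem one level down, so this \emph{is} an induction you must set up and carry through, including the boundary analysis. Your fallback ``induction on $m$'' is exactly what the paper does: it proves $\alpha_n\alpha_n$ overlap-free by exhibiting the conjugacy $\alpha_{n+1}=1001\,\mu(\alpha_n)\,(1001)^{-1}$, whence $\alpha_{n+1}\alpha_{n+1}=1001\,\mu(\alpha_n\alpha_n)\,(1001)^{-1}$, and then applies Lemma~\ref{squares} with $x=1001$, $w=\mu(\alpha_n)$ to close the induction. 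Until you carry out one of these inductions explicitly, the proposal remains a sketch rather than a proof.
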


\begin{proof}
The the upper bound for $M(w)$ is given in Lemma~\ref{upper}.
For equality we rely on the factors of the Thue-Morse word $\tm$. Let $\alpha_n$ denote the factor of $\tm$ of length $3\sdot 2^n$ 
that starts at position~5, e.g.,
\[
\alpha_1 = 100110, \ \text{ and } \  
\alpha_2 = 1001 1001 0110\,.
\]
Then $|\mu(\alpha_n)|=2|\alpha_n|= |\alpha_{n+1}|$,
and $\mu(\alpha_n)=1001\sdot 0110 \cdots 0110\sdot 1001$,
where $\mu(\alpha_n)$ starts after $\mu(0110)$ at position~9 of $\tm$. Hence
$\alpha_{n+1} = 1001\sdot 1001 \cdots 0110$ is a conjugate
of $\mu(\alpha_n)$.
In particular, as a factor of $\tm$, $1001 \mu(\alpha_n)$ is overlap-free for all $n$.

We show inductively that $\alpha_n\alpha_n$ is overlap-free for all $n$. 
To begin with
$\alpha_1\alpha_1= 100110 100110$  is overlap-free.
For $n \ge 1$, we have
\begin{align*}
\alpha_{n+1}\alpha_{n+1} &= 
1001\,\mu(\alpha_n)(1001)^{-1}
(1001)\, \mu(\alpha_n)(1001)^{-1} \\
&=1001\,\mu(\alpha_n)\mu(\alpha_n)(1001)^{-1}\\
&=1001\,\mu(\alpha_n\alpha_n)(1001)^{-1}\,,
\end{align*}
where $1001\,\mu(\alpha_n)$ is overlap-free and, by the induction hypothesis,  so is  $\mu(\alpha_n)\mu(\alpha_n)$
since $\mu$ preserves overlap-freeness.
Lemma~\ref{squares} applied to
$1001\,\mu(\alpha_n\alpha_n)$ gives that
$1001\,\mu(\alpha_n\alpha_n)(1001)^{-1}$ is overlap-free.
Therefore $\alpha_{n+1}\alpha_{n+1}$ is overlap-free.

\smallskip
Denote by $\overline{\alpha}$ the prefix of $\alpha$ of 
length $|\alpha|-1$, i.e., truncate the last letter of $\alpha$.
Let
\[
w_n = 00\alpha_n \overline{\alpha}_n 
= 0(0\overline{\alpha}_n)(0\overline{\alpha}_n)
\] 
By Lemma~\ref{squares}, $0\alpha_n\overline{\alpha}_n$ is
overlap-free.
Also, $00\alpha_n \overline{\alpha}_n$ has a special prefix $001001$, 
and hence $w_n$ is overlap-free.

By Lemma~\ref{TM:even}, 
$M(\alpha_n\alpha_n)= (2|\alpha_n|/2 -1) +1=|\alpha_n|-1$
where the $+1$ comes from the middle position.
The centres of squares have even positions in
$\alpha_n\alpha_n$. 
Therefore also $M(\alpha_n\overline{\alpha}_n)=|\alpha_n|-1$.
The squares $001001$ and $00$ are at  the consecutive positions
$3$ and $4$ of $w_n$, and
in the middle portion we have $(0\overline{\alpha}_n)(0\overline{\alpha}_n)$ 
next to the squares $00$ and $1010$ (of even positions).
Since $|w_n|=2|\alpha_n| +1$, we have
\[
M(w_n)=|\alpha_n|-1+3 \ \text{ and so } \ 2 M(w_n)= |w_n|+3
\]
as required.
\end{proof}

\bibliographystyle{plain}
\bibliography{bibo.bib}

\end{document}